\newtheorem{thm}{Theorem}[section]
\newtheorem{lem}[thm]{Lemma}
\newtheorem{coro}[thm]{Corollary}
\newtheorem{conj}[thm]{Conjecture}
\theoremstyle{definition}
\newtheorem{exm}[thm]{Example}
\newtheorem{rem}[thm]{Remark}
\def \L{\mathscr{L}}
\journal{Adv. in Appl. Math.}
\begin{document}

\begin{frontmatter}

\title{Log-convex and Stieltjes moment sequences}

\author[a]{Yi Wang\corref{cor1}}
\ead{wangyi@dlut.edu.cn}
\author[b]{Bao-Xuan Zhu\corref{cor2}}
\ead{bxzhu@jsnu.edu.cn}
\cortext[cor2]{Corresponding author.}
\address[a]{School of Mathematical Sciences, Dalian University of Technology, Dalian 116024, PR China}
\address[b]{School of Mathematics and Statistics, Jiangsu Normal University, Xuzhou 221116, PR China}

\begin{abstract}
We show that Stieltjes moment sequences are infinitely log-convex,
which parallels a famous result that (finite) P\'olya frequency
sequences are infinitely log-concave. We introduce the concept of
$q$-Stieltjes moment sequences of polynomials and show that many
well-known polynomials in combinatorics are such sequences. We
provide a criterion for linear transformations and convolutions
preserving Stieltjes moment sequences. Many well-known combinatorial
sequences are shown to be Stieltjes moment sequences in a unified
approach and therefore infinitely log-convex, which in particular
settles a conjecture of Chen and Xia about the infinite
log-convexity of the Schr\"oder numbers. We also list some
interesting problems and conjectures about the log-convexity and the
Stieltjes moment property of the (generalized) Ap\'ery numbers.
\end{abstract}

\begin{keyword}
Log-convex sequence\sep Stieltjes moment sequence\sep Totally positive matrix
\MSC[2010]  05A20\sep 15B05\sep 15B99\sep 44A60
\end{keyword}

\end{frontmatter}

\section{Introduction}

Let $\alpha=(a_k)_{k\ge 0}$ be a sequence of nonnegative numbers.
The sequence is called {\it log-convex} ({\it log-concave}, resp.)
if $a_{k}a_{k+2}\ge a_{k+1}^2$ ($a_{k}a_{k+2}\le a_{k+1}^2$, resp.) for all $k\ge 0$.
The log-convex and log-concave sequences arise often in combinatorics
and have been extensively investigated.
We refer the reader to \cite{Bre94,Sta89,WY07} for the log-concavity and \cite{LW07,Zhu13} for the log-convexity.
A basic approach to such problems comes from the theory of total positivity \cite{Bre89,Bre94,Bre95,Bre96,CLW-EuJC15,CLW-LAA15}.

Let $A=[a_{n,k}]_{n,k\ge 0}$ be a finite or infinite matrix of real numbers.
It is called {\it totally positive} ({\it TP} for short) if all its minors are nonnegative.
It is called {\it TP$_2$} if all minors of order $\le 2$ are nonnegative.
Given a sequence $\alpha=(a_k)_{k\ge 0}$,
define its {\it Toeplitz matrix} $T(\alpha)$ and {\it Hankel matrix} $H(\alpha)$ by
$$T(\alpha)=[a_{i-j}]_{i,j\ge 0}=
\left[
  \begin{array}{ccccc}
    a_0 &  &  &  & \\
    a_1 & a_0 &  &  & \\
    a_2 & a_1 & a_0 &  & \\
    a_3 & a_2 & a_1 & a_0 & \\
    \vdots &  & \cdots &  & \ddots \\
  \end{array}
\right]$$
and
$$H(\alpha)=[a_{i+j}]_{i,j\ge 0}=
\left[
  \begin{array}{ccccc}
    a_0 & a_1 & a_2 & a_3 & \cdots \\
    a_1 & a_2 & a_3 & a_4 & \\
    a_2 & a_3 & a_4 & a_5 & \\
    a_3 & a_4 & a_5 & a_6 & \\
    \vdots &  &  &  & \ddots \\
  \end{array}
\right].$$
Clearly, a sequence of positive numbers is log-concave (log-convex, resp.) if and only if
the corresponding Toeplitz matrix (Hankel matrix, resp.) is TP$_2$.

We say that $\alpha$ is a {\it P\'olya frequency sequence} ({\it PF} for short) if its Toeplitz matrix $T(\alpha)$ is TP.
Such sequences have been deeply studied in the theory of total positivity \cite{Kar68} and in combinatorics \cite{Bre89}.
For example, the fundamental representation theorem of Schoenberg and Edrei states that
a sequence $a_0=1, a_1, a_2, \ldots$ of real numbers is PF if and only if its generating function has the form
$$\sum_{k\ge 0}a_kx^k=e^{\gamma z}\frac{\prod_{j\ge 1}(1+\alpha_j z)}{\prod_{j\ge 1}(1-\beta_j z)}$$
in some open disk centered at the origin, where $\alpha_j,\beta_j,\gamma\ge 0$ and $\sum_{j\ge 1}(\alpha_j+\beta_j)<+\infty$
(see \cite[p. 412]{Kar68} for instance).
In particular, a finite sequence of nonnegative numbers is PF if and only if its generating function has only real zeros \cite[p. 399]{Kar68}.

We say that $\alpha=(a_k)_{k\ge 0}$ is a {\it Stieltjes moment}
({\it SM} for short) sequence if its Hankel matrix $H(\alpha)$ is
TP. It is well known that $\alpha$ is a Stieltjes moment sequence if
and only if it has the form
\begin{equation}\label{i-e}
a_k=\int_0^{+\infty}x^kd\mu(x),
\end{equation}
where $\mu$ is a non-negative measure on $[0,+\infty)$
(see \cite[Theorem 4.4]{Pin10} for instance).
Stieltjes moment problem is one of classical moment problems
and arises naturally in many branches of mathematics
\cite{ST43,Wid41}.
It is well known that many counting coefficients form Stieltjes moment sequences, 
including the Bell numbers, the Catalan numbers,
the central binomial coefficients, the central Delannoy numbers,
the factorial numbers, the Schr\"oder numbers.
See \cite{LMW15} for details.

Boros and Moll~\cite[p. 157]{BM04} introduced the concept of the
infinite log-concavity. Given a sequence $\alpha=(a_k)_{k\ge 0}$ of
nonnegative numbers, define a new sequence
$\mathcal{L}(\alpha)=(b_k)_{k\ge 0}$ by $b_0=a_0^2$ and
$b_{k+1}=a^2_{k+1}-a_{k}a_{k+2}$. Then the sequence $\alpha$ is
log-concave if and only if the sequence $\mathcal{L}(\alpha)$ is
nonnegative, i.e., all $b_k$ are nonnegative. Call $\alpha$ {\it
infinitely log-concave} if $\mathcal{L}^i(\alpha)$ is nonnegative
for all $i\geq 1$, where
$\mathcal{L}^i=\mathcal{L}(\mathcal{L}^{i-1})$.
The following result was conjectured
independently by Fisk, Stanley, McNamara and Sagan~\cite{MS10}
and shown by Br\"{a}ndr\'{e}n~\cite{Bra09}.

\begin{thm}\label{Brandenthm}
The operator $\mathcal{L}$ preserves the PF property of finite sequences.
A finite P\'olya frequency sequence is therefore infinitely log-concave.
\end{thm}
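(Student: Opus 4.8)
The plan is to reduce the statement to a question about real zeros and then to invoke the theory of stability-preserving operators. By the Schoenberg--Edrei characterization recalled above, a finite nonnegative sequence $\alpha=(a_0,a_1,\dots,a_n)$ is PF if and only if its generating polynomial $f(x)=\sum_{k=0}^{n}a_kx^k$ has only real zeros; since the coefficients are nonnegative these zeros are automatically nonpositive. Hence it suffices to prove the single-step claim: if $f$ has only real nonpositive zeros, then the generating polynomial $g(x)=\sum_{k\ge 0}b_kx^k$ of $\mathcal{L}(\alpha)$, where $b_0=a_0^2$ and $b_k=a_k^2-a_{k-1}a_{k+1}$ for $k\ge 1$, again has only real nonpositive zeros and nonnegative coefficients. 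Granting this, $\mathcal{L}$ maps finite PF sequences to finite PF sequences, and a trivial induction shows that $\mathcal{L}^i(\alpha)$ is PF, hence log-concave, hence nonnegative, for every $i\ge 1$, which is exactly infinite log-concavity.

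First I would record that the coefficients $b_k$ are nonnegative: this is Newton's inequality $a_k^2\ge a_{k-1}a_{k+1}$ for polynomials with only real zeros, so nonnegativity of $\mathcal{L}(\alpha)$ by itself is elementary. The real work is to control the \emph{location} of the zeros of $g$, and for this I would lift $f$ to two variables. Since $f$ has only real zeros, the bivariate polynomial $F(x,y)=f(x)f(y)$ is real-stable, and each quantity $b_k=a_k^2-a_{k-1}a_{k+1}$ is precisely a $2\times 2$ anti-diagonal minor of the coefficient array $[a_ia_j]$ of $F$. Thus $g=\Phi\big(f(x)f(y)\big)$ for a suitable linear operator $\Phi$ from bivariate to univariate polynomials that collects these anti-diagonal determinants into a single variable. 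The task becomes showing that $\Phi$ carries real-stable bivariate polynomials to univariate polynomials with only real nonpositive zeros.

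The key step, and the main obstacle, is to prove that this extraction operator preserves stability. Here I would appeal to the Borcea--Br\"anden characterization of stability-preserving linear operators together with the Grace--Walsh--Szeg\H{o} coincidence theorem: one computes the symbol of $\Phi$ and checks that it is stable, which forces $\Phi$ to send real-stable polynomials to real-stable, in particular real-rooted, ones. The delicate points are (i) verifying stability of the symbol, since the minus sign in $a_k^2-a_{k-1}a_{k+1}$ means $g$ is a genuine difference and \emph{cannot} be obtained as a Hadamard product of real-rooted polynomials, so no soft argument suffices; and (ii) handling the degenerate boundary cases --- multiple zeros, vanishing coefficients, and the drop in degree caused by truncation of the finite sequence --- which I would treat by a standard perturbation argument, first perturbing $f$ to have simple strictly negative zeros and then passing to the limit via Hurwitz's theorem.

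Finally, once $g$ is shown to be real-rooted with nonnegative coefficients, the nonpositivity of its zeros follows from the sign pattern of its coefficients, so $\mathcal{L}(\alpha)$ is again a finite PF sequence. Iterating yields that every $\mathcal{L}^i(\alpha)$ is PF and therefore log-concave and nonnegative, which establishes that finite P\'olya frequency sequences are infinitely log-concave.
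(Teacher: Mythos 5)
Your proposal cannot really be checked against ``the paper's own proof,'' because the paper contains none: Theorem \ref{Brandenthm} is quoted from Br\"and\'en \cite{Bra09} purely as motivation, and the only argument the paper actually gives is for the parallel Stieltjes-moment statement (Theorem \ref{mainthm}), via second compound matrices of Hankel matrices. That elementary argument provably does not transplant to the present statement: it exploits the facts that positive definiteness passes to compound matrices and to principal submatrices, whereas the second compound of a TP matrix need not be TP (for instance, the second compound of the $4\times 4$ Toeplitz matrix of the PF sequence $(1,1,0,0)$, with index pairs ordered lexicographically, has a negative $2\times 2$ minor, even though the matrix $[b_{p-q}]$ of $\mathcal{L}$-values sits inside it as the submatrix on consecutive pairs). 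So the Toeplitz/PF case really does require the geometry of zeros, and your overall plan --- reduce PF to real-rootedness of the generating polynomial, lift to the stable bivariate polynomial $f(x)f(y)$, realize $\mathcal{L}$ as a linear operator $\Phi$ on bivariate polynomials, and invoke stability-preserver theory --- is in fact the strategy of Br\"and\'en's published proof. You have identified the correct route, and your remark that no soft (Hadamard-product) argument can work is accurate.

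As a proof, however, the proposal has a genuine gap, and it sits exactly where you yourself locate the ``main obstacle'': you never verify that the symbol of $\Phi$ is stable. That verification is not a routine check one can delegate to the Borcea--Br\"and\'en characterization and Grace--Walsh--Szeg\H{o}; it is the entire mathematical content of the theorem and is where the real work in \cite{Bra09} lies --- the minus sign in $a_k^2-a_{k-1}a_{k+1}$ that you correctly flag is precisely what makes it delicate. As written, your argument establishes nothing beyond the elementary Newton-inequality step. There is also an error in the setup: $b_k=a_k^2-a_{k-1}a_{k+1}$ is \emph{not} a $2\times 2$ anti-diagonal minor of the coefficient array $[a_ia_j]$ of $F=f(x)f(y)$; that array has rank one, so every $2\times 2$ minor of it vanishes. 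Rather, $b_k$ is a difference of two \emph{entries}, $b_k=[x^ky^k]F-[x^{k-1}y^{k+1}]F$, i.e.\ a linear functional of the coefficients of $F$ --- which is exactly what makes a linear $\Phi$ with $\Phi\bigl(f(x)f(y)\bigr)=\sum_k b_kx^k$ exist at all (e.g.\ $\Phi(x^iy^i)=x^i$, $\Phi(x^iy^j)=-\tfrac12 x^{(i+j)/2}$ for $|i-j|=2$, and $\Phi(x^iy^j)=0$ otherwise). That slip is repairable, but together with the unproved stability of the symbol, the proposal remains a historically accurate plan rather than a proof.
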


Very recently, Chen and Xia~\cite{CX11} introduced the notion of the infinite log-convexity.
Let $\alpha=(a_k)_{k\ge 0}$ be an infinite sequence of positive numbers.
Define a new sequence $\L(\alpha)=(c_k)_{k\ge 0}$ by $c_k=a_{k}a_{k+2}-a^2_{k+1}$.
Then $\alpha$ is log-convex if and only if $\L(\alpha)$ is nonnegative.
Call $\alpha$ {\it $m$-log-convex} if $\L^i(\alpha)$ is nonnegative for all $1\le i\le m$ and
{\it infinitely log-convex} if $\L^i(\alpha)$ is nonnegative for all $i\geq 1$.
Chen and Xia~\cite{CX11} showed that some combinatorial sequences,
including the Ap\'ery numbers and the Schr\"oder numbers, are $2$-log-convex via analytic methods.
Based on numerical evidence they further suggested the infinite log-convexity of these sequences.
However, no non-trivial infinitely log-convex sequences is presented.

In the next section we show that Stieltjes moment sequences are infinitely log-convex,
a parallel result to Theorem \ref{Brandenthm}.
So many famous counting coefficients turn to be infinitely log-convex.
For example, the sequence of the large Schr\"oder numbers is a Stieltjes moment sequence,
and is therefore infinitely log-convex. In \S 3, we introduce the
concept of $q$-Stieltjes moment sequences of polynomials and show
that many well-known polynomials in combinatorics are such
sequences, including the Bell polynomials, the Eulerian polynomials,
the Narayana polynomials (of type B), the $q$-central Delannoy
numbers and the $q$-Schr\"oder numbers. In \S 4, we provide a
criterion for the linear transformations and convolutions preserving
Stieltjes moment sequences. The SM properties of many well-known
combinatorial sequences are easily followed from this viewpoint.
Finally in \S 5, we list some interesting problems and conjectures
about the log-convexity and the SM property of the (generalized)
Ap\'ery numbers.

\section{Infinitely log-convex sequences}

In this section we show that Stieltjes moment sequences are infinitely log-convex.
We need the following classical characterization of Stieltjes moment sequences.

Given a sequence $\alpha=(a_k)_{k\ge 0}$, let $\overline{\alpha}=(a_{k+1})_{k\ge 0}$ denote the shifted sequence of $\alpha$.

\begin{lem}[{\cite[Theorem 1.3]{ST43}}]\label{2pd}
The sequence $\alpha$ is a Stieltjes moment sequence if and only if
both $H(\alpha)$ and $H(\overline{\alpha})$ are positive definite matrices.
\end{lem}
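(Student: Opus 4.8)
The plan is to pass from the sequence to the linear \emph{moment functional} $L$ on $\mathbb{R}[x]$ defined by $L(x^k)=a_k$, and to reinterpret the two positive-definiteness hypotheses as positivity properties of $L$. The necessity direction ($\Rightarrow$) is routine: if $\alpha$ is a Stieltjes moment sequence with representing measure $\mu$ on $[0,+\infty)$ as in \eqref{i-e}, then for any nonzero $(c_0,\dots,c_n)$ and $p(x)=\sum_{i=0}^n c_ix^i$ one has $\sum_{i,j}c_ic_ja_{i+j}=\int_0^{+\infty}p(x)^2\,d\mu(x)$ and $\sum_{i,j}c_ic_ja_{i+j+1}=\int_0^{+\infty}x\,p(x)^2\,d\mu(x)$. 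Both integrands are nonnegative on the support of $\mu$, and since an infinite (non-degenerate) Stieltjes moment sequence forces $\mu$ to have infinite support, neither $p^2$ nor $xp^2$ can vanish $\mu$-almost everywhere; hence both quadratic forms are strictly positive, which is exactly the positive definiteness of $H(\alpha)$ and $H(\overline{\alpha})$.

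The substance lies in the sufficiency direction ($\Leftarrow$). The two hypotheses say precisely that $L(p^2)>0$ and $L(xp^2)>0$ for every nonzero $p\in\mathbb{R}[x]$. I would then invoke the classical algebraic fact that a polynomial is nonnegative on $[0,+\infty)$ if and only if it can be written as $A(x)^2+xB(x)^2$ for some $A,B\in\mathbb{R}[x]$. Combined with the two positivity conditions, this yields $L(q)\ge 0$ for every polynomial $q$ that is nonnegative on the half-line. What remains is to realise this nonnegative functional by an honest measure supported on $[0,+\infty)$, i.e.\ to produce $\mu$ with $a_k=\int_0^{+\infty}x^k\,d\mu(x)$, which exhibits $\alpha$ as a Stieltjes moment sequence.

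For the representation step I would use the orthogonal-polynomial construction. Positive definiteness of $H(\alpha)$ lets me Gram--Schmidt the monomials with respect to $L$ into monic orthogonal polynomials $P_n$ obeying a three-term recurrence $xP_n(x)=P_{n+1}(x)+b_nP_n(x)+\lambda_nP_{n-1}(x)$ with $\lambda_n>0$; the associated tridiagonal Jacobi matrix is symmetric, and a Helly selection / spectral argument recovers a measure on $\mathbb{R}$ whose moments are the $a_k$. The extra hypothesis that $H(\overline{\alpha})$ is positive definite—equivalently, positive definiteness of the shifted functional $p\mapsto L(xp)$—is what confines the support to $[0,+\infty)$, forcing the zeros of the $P_n$ (which are exactly the support points of the spectral measure) to be nonnegative. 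The main obstacle I anticipate is precisely this last passage from positivity of $L$ to a representing measure on the half-line, the Stieltjes analogue of Haviland's theorem: the delicate point is analytic rather than algebraic, namely controlling the limit of the truncated problems so that no mass escapes to $-\infty$ and the spectrum stays in $[0,+\infty)$, and it is cleanest to isolate this by working with the shifted functional $L(xp)$ whose positive definiteness is the content of the $H(\overline{\alpha})$ hypothesis.
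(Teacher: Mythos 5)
The paper never proves this lemma: it is quoted verbatim from Shohat--Tamarkin \cite[Theorem 1.3]{ST43}, so there is no internal argument to compare yours against. What you have written is a reconstruction of the standard classical proof, and the outline is sound: necessity from the representation \eqref{i-e}, and sufficiency by combining the P\'olya--Szeg\"o decomposition (every polynomial nonnegative on $[0,+\infty)$ has the form $A(x)^2+xB(x)^2$, so positive definiteness of $H(\alpha)$ and $H(\overline{\alpha})$ yields $L(q)\ge 0$ for every such $q$) with a representation theorem producing the measure. It is worth noticing that the intermediate statement you arrive at --- $L(q)\ge 0$ for all polynomials $q$ nonnegative on the half-line implies $\alpha$ is SM --- is precisely the paper's Lemma \ref{p-c}, which the authors also quote from \cite{ST43} without proof. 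So in effect your argument reduces Lemma \ref{2pd} to Lemma \ref{p-c} plus an algebraic sum-of-squares identity, and then defers the genuinely analytic content (Riesz--Haviland on $[0,+\infty)$, or the Helly limit of Gauss quadrature measures in the Jacobi-matrix construction) to the classical literature; you say so explicitly, and this is on par with the paper's own practice of pure citation.

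Two points need repair before this could stand as a proof. First, the equivalence as stated (in the paper and in your necessity argument) requires a non-degeneracy convention: an SM sequence whose representing measure has finite support --- e.g.\ $a_k\equiv 1$ with $\mu=\delta_1$ --- has a totally positive but singular Hankel matrix, so $H(\alpha)$ is \emph{not} positive definite; Shohat--Tamarkin's Theorem~1.3 concerns solutions with infinitely many points of increase, and your parenthetical ``(non-degenerate)'' is doing real work that should be made precise rather than assumed. Second, a smaller inaccuracy: the zeros of the orthogonal polynomials $P_n$ are the atoms of the $n$-th Gauss quadrature measure, not of the limiting spectral measure; the correct statement is that positive definiteness of $p\mapsto L(xp^2)$ forces those zeros into $(0,+\infty)$, hence each quadrature measure is supported on $[0,+\infty)$, and any vague limit inherits this support. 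With those two repairs, your sketch is the textbook proof of the cited result.
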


\begin{coro}\label{mainremark}
Let $\alpha$ be a Stieltjes moment sequence.
Then $\L(\alpha)$ is positive.
\end{coro}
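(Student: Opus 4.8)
The plan is to recognize each entry $c_k=a_ka_{k+2}-a_{k+1}^2$ of $\L(\alpha)$ as a $2\times 2$ principal minor of one of the two Hankel matrices supplied by Lemma~\ref{2pd}, and then to invoke their positive definiteness. The first step is to write $c_k$ as a determinant: in $H(\alpha)=[a_{i+j}]_{i,j\ge 0}$ the $2\times 2$ submatrix on rows $\{i,i+1\}$ and columns $\{j,j+1\}$ has determinant $a_{i+j}a_{i+j+2}-a_{i+j+1}^2=c_{i+j}$. Thus $c_k$ arises as such a minor whenever $i+j=k$.

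The key step is to split according to the parity of $k$. For even $k=2m$, choosing $i=j=m$ exhibits $c_{2m}$ as the determinant of the \emph{principal} submatrix of $H(\alpha)$ indexed by rows and columns $\{m,m+1\}$. For odd $k=2m+1$, the same choice $i=j=m$ in the shifted Hankel matrix $H(\overline{\alpha})=[a_{i+j+1}]_{i,j\ge 0}$ yields the principal submatrix indexed by $\{m,m+1\}$, whose determinant is $a_{2m+1}a_{2m+3}-a_{2m+2}^2=c_{2m+1}$. Since every principal submatrix of a positive definite matrix is again positive definite, and hence has strictly positive determinant, both families of minors are positive. By Lemma~\ref{2pd} both $H(\alpha)$ and $H(\overline{\alpha})$ are positive definite, so every $c_k>0$ and $\L(\alpha)$ is positive.

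There is no serious obstacle here; the one point that must not be overlooked is that a single Hankel matrix controls, as principal minors, only the even-indexed terms $c_{2m}$. This is precisely why Lemma~\ref{2pd} is stated with \emph{both} $H(\alpha)$ and $H(\overline{\alpha})$: the shifted matrix is exactly what is needed to capture the odd-indexed terms $c_{2m+1}$. An alternative route would appeal directly to the integral representation \eqref{i-e} and write $c_k$ as the manifestly nonnegative double integral $\tfrac12\iint (xy)^k(x-y)^2\,d\mu(x)\,d\mu(y)$, but the determinantal argument is cleaner and delivers strict positivity immediately from the positive definiteness in Lemma~\ref{2pd} rather than requiring a separate nondegeneracy argument on $\mu$.
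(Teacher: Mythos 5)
Your proof is correct and is essentially the paper's own argument: both identify $c_k=a_ka_{k+2}-a_{k+1}^2$ as a $2\times 2$ principal minor of $H(\alpha)$ for even $k$ and of $H(\overline{\alpha})$ for odd $k$, and then invoke the positive definiteness of both matrices from Lemma~\ref{2pd} to conclude strict positivity. The parity split you emphasize is exactly the mechanism in the paper's proof.
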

\begin{proof}
The positive definiteness of $H(\alpha)$ and $H(\overline{\alpha})$
imply that their leading principal minors of order $2$ are positive:
$$a_{2k}a_{2k+2}-a^2_{2k+1}=\det\left[\begin{array}{ll}
a_{2k}&a_{2k+1}\\
a_{2k+1}&a_{2k+2}
\end{array}\right]>0
$$
and
$$a_{2k-1}a_{2k+1}-a^2_{2k}=\det\left[\begin{array}{ll}
a_{2k-1}&a_{2k}\\
a_{2k}&a_{2k+1}
\end{array}\right]>0.
$$
Thus the sequence $\alpha$ is strictly log-convex:
$$a_{k}a_{k+2}-a^2_{k+1}>0,\qquad k=0,1,2,\ldots.$$
In other words, $\L(\alpha)$ is positive.
\end{proof}

If the Hankel matrix of a sequence is positive definite,
then we say that the sequence is a {\it positive definite sequence},
or a {\it Hamburger moment sequence}.
Such a sequence has the form
$$a_k=\int_{-\infty}^{+\infty}x^kd\mu(x),\quad k=0,1,2,\ldots,$$
where $\mu$ is a positive Borel measure on $(-\infty,+\infty)$ (see \cite{ST43} for instance).
By Lemma \ref{2pd},
a sequence $\alpha$ is a Stieltjes moment sequence if and only if both $\alpha$ and $\overline{\alpha}$ are positive definite sequences.

\begin{thm}\label{mainthm}
The operator $\L$ preserves the SM property.
A Stieltjes moment sequence is therefore infinitely log-convex.
\end{thm}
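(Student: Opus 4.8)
The plan is to prove that $\L$ preserves the SM property; the claim about infinite log-convexity then follows immediately by induction, since $\L^i(\alpha) = \L(\L^{i-1}(\alpha))$ will be SM (hence nonnegative, indeed positive by Corollary \ref{mainremark}) at every stage. So the entire task reduces to the following single step: if $\alpha$ is a Stieltjes moment sequence, then so is $\L(\alpha) = (c_k)_{k \ge 0}$ where $c_k = a_k a_{k+2} - a_{k+1}^2$.

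My first move is to pass to the integral representation. By Corollary \ref{mainremark}, $\L(\alpha)$ is already known to be a positive sequence, so the content is entirely in the Hankel total positivity. I would try to exhibit $c_k$ directly as a moment $\int_0^\infty x^k \, d\nu(x)$ for some nonnegative measure $\nu$ on $[0,\infty)$. The natural device is to write each $a_k = \int_0^\infty x^k \, d\mu(x)$ and compute $c_k = a_k a_{k+2} - a_{k+1}^2$ as a double integral over two independent copies of $\mu$. Symmetrizing in the two variables $x,y$ should convert the combination $x^k y^{k+2} + x^{k+2} y^k - 2 x^{k+1} y^{k+1}$ into $(xy)^k (x-y)^2$, up to the factor of $2$ absorbed into the product measure. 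Concretely, I expect
$$
c_k = \frac{1}{2} \int_0^\infty \!\! \int_0^\infty (xy)^k (x - y)^2 \, d\mu(x)\, d\mu(y).
$$

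Granting this, the representation essentially finishes the proof: the factor $(xy)^k$ is the $k$th power of the new integration variable, and $(x-y)^2 \, d\mu(x)\, d\mu(y)$ is a nonnegative weight on the region $xy \ge 0$. The remaining technical step is to push this forward to a genuine one-dimensional moment integral on $[0,\infty)$ — that is, to define $d\nu$ as the image of the measure $\tfrac{1}{2}(x-y)^2 \, d\mu(x)\, d\mu(y)$ under the map $(x,y) \mapsto xy$. Since $x,y \ge 0$ forces $xy \ge 0$, the pushforward $\nu$ is supported on $[0,\infty)$ and is manifestly nonnegative, giving $c_k = \int_0^\infty t^k \, d\nu(t)$, which is exactly the integral characterization \eqref{i-e} of an SM sequence.

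The main obstacle I anticipate is not the algebraic identity, which is routine symmetrization, but rather verifying the convergence and finiteness of the moments of the new measure $\nu$ — one must check that $\int_0^\infty t^k \, d\nu(t) < \infty$, equivalently that the double integrals defining $c_k$ converge absolutely, which follows from the finiteness of the moments of $\mu$ via the Cauchy–Schwarz estimate $a_{k+1}^2 \le a_k a_{k+2}$ already in hand. A cleaner alternative that sidesteps measure-theoretic bookkeeping is to argue purely at the level of Hankel matrices using Lemma \ref{2pd}: show directly that $H(\L(\alpha))$ and $H(\overline{\L(\alpha)})$ are positive definite, since $\alpha$ is SM iff both $H(\alpha)$ and $H(\overline{\alpha})$ are positive definite. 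This recasts the problem as establishing that the Hankel minors of $\L(\alpha)$ are positive, which one can attempt through Hankel determinant identities (of Dodgson/Jacobi type) relating the minors of $H(\L(\alpha))$ to those of $H(\alpha)$. I would present the integral-representation argument as the primary proof for its transparency, keeping the positive-definiteness reformulation in reserve as the rigorous backbone guaranteeing that every moment is finite.
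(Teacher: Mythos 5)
Your proof is correct, but it proceeds by a genuinely different route than the paper's. You rely on the integral characterization \eqref{i-e}: writing $a_k=\int_0^{+\infty}x^k\,d\mu(x)$ and symmetrizing the double integral, you obtain
$$a_ka_{k+2}-a_{k+1}^2=\frac{1}{2}\int_0^{+\infty}\!\!\int_0^{+\infty}(xy)^k(x-y)^2\,d\mu(x)\,d\mu(y),$$
and the pushforward of the nonnegative measure $\tfrac{1}{2}(x-y)^2\,d\mu(x)\,d\mu(y)$ under $(x,y)\mapsto xy$ is an explicit representing measure for $\L(\alpha)$ supported in $[0,+\infty)$; since all integrands are nonnegative, Tonelli's theorem justifies every exchange of integrals, and finiteness is automatic because the total mass of the new measure is $c_0$ and its $k$th moment is $c_k\le a_ka_{k+2}<\infty$. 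The paper instead argues with linear algebra: by Lemma \ref{2pd} it suffices to show that $\L$ preserves positive definiteness of sequences, and the key observation is that $H_{n-1}(\L(\alpha))$ is a principal submatrix of the second compound matrix $C(H_n(\alpha))$, which is positive definite whenever $H_n(\alpha)$ is (since $A=P^TP$ gives $C(A)=C^T(P)C(P)$); combined with the identity $\overline{\L(\alpha)}=\L(\overline{\alpha})$, this closes the argument. What your route buys is an explicit formula for the representing measure of $\L(\alpha)$ and freedom from compound-matrix machinery; its cost is invoking the deeper equivalence \eqref{i-e} between Hankel total positivity and existence of a representing measure, whereas the paper needs only the positive-definiteness criterion of Lemma \ref{2pd} and stays structurally parallel to Br\"and\'en's compound-matrix proof of Theorem \ref{Brandenthm}. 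One minor simplification available to you: for the inductive step you only need $\L^i(\alpha)$ to be nonnegative, which already follows from its being SM (the entries of a TP matrix are nonnegative), so the appeal to Corollary \ref{mainremark} for strict positivity is a convenience rather than a necessity in your argument.
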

\begin{proof}
Let $A=[a_{ij}]_{0\le i,j\le n}$ be an $n\times n$ matrix.
The compound matrix $C(A)$ of $A$ is the $\binom{n}{2}\times \binom{n}{2}$ matrix,
whose elements are all minors of order $2$ of $A$,
arranged lexicographically according to the row and column indices of the minors.
The compound operation has the following properties:
\begin{itemize}
  \item [\rm (i)] $C(A^T)=C^T(A)$;
  \item [\rm (ii)] $C(AB)=C(A)C(B)$; and
  \item [\rm (iii)] if $A$ is invertible, then so is $C(A)$.
\end{itemize}
See \cite[p. 1]{Kar68} or \cite[p. 21]{HJ85} for details.
Clearly, if $A$ is a positive definite matrix, then so is $C(A)$.
Indeed, if the matrix $A=P^TP$ is congruent to the identity matrix,
then so is its compound matrix $C(A)=C^T(P)C(P)$.

We first show that the operator $\L$ preserves the positive
definiteness of sequences. Let $\alpha=(a_k)_{k\ge 0}$ be a positive
definite sequence. Then all $H_n(\alpha)=[a_{i+j}]_{0\le i,j\le n}$
are positive definite by the definition. Thus the compound matrix
$C(H_n(\alpha))$ is also positive definite. We need to prove that
all $H_n(\L(\alpha))$ are positive definite. The key observation
behind our proof is that $H_{n-1}(\L(\alpha))$ is a principal
submatrix of $C(H_n(\alpha))$ by symmetry. For example, consider the
case $n=3$. Then $H_3(\alpha), C(H_3(\alpha))$ and $H_2(\L(\alpha))$ are
$$\left[
\begin{array}{rrrr}a_{0}&a_{1}&a_{2}&a_{3}\\
a_{1}&a_{2}&a_{3}&a_{4}\\
a_{2}&a_{3}&a_{4}&a_{5}\\
a_{3}&a_{4}&a_{5}&a_{6}\\
\end{array}\right],$$
$$\left[
\begin{array}{rrrrrr}
\left|\begin{array}{cc}a_{0}&a_{1}\\a_{1}&a_{2}
\end{array}\right|&
\left|\begin{array}{cc}a_{0}&a_{2}\\a_{1}&a_{3}
\end{array}\right|&
\left|\begin{array}{cc}a_{0}&a_{3}\\a_{1}&a_{4}
\end{array}\right|&
\left|\begin{array}{cc}a_{1}&a_{2}\\a_{2}&a_{3}
\end{array}\right|&
\left|\begin{array}{cc}a_{1}&a_{3}\\a_{2}&a_{4}
\end{array}\right|&
\left|\begin{array}{cc}a_{2}&a_{3}\\a_{3}&a_{4}
\end{array}\right|
\\[5mm]
\left|\begin{array}{cc}a_{0}&a_{1}\\a_{2}&a_{3}
\end{array}\right|&
\left|\begin{array}{cc}a_{0}&a_{2}\\a_{2}&a_{4}
\end{array}\right|&
\left|\begin{array}{cc}a_{0}&a_{3}\\a_{2}&a_{5}
\end{array}\right|&
\left|\begin{array}{cc}a_{1}&a_{2}\\a_{3}&a_{4}
\end{array}\right|&
\left|\begin{array}{cc}a_{1}&a_{3}\\a_{3}&a_{5}
\end{array}\right|&
\left|\begin{array}{cc}a_{2}&a_{3}\\a_{4}&a_{5}
\end{array}\right|
\\[5mm]
\left|\begin{array}{cc}a_{0}&a_{1}\\a_{3}&a_{4}
\end{array}\right|&
\left|\begin{array}{cc}a_{0}&a_{2}\\a_{3}&a_{5}
\end{array}\right|&
\left|\begin{array}{cc}a_{0}&a_{3}\\a_{3}&a_{6}
\end{array}\right|&
\left|\begin{array}{cc}a_{1}&a_{2}\\a_{4}&a_{5}
\end{array}\right|&
\left|\begin{array}{cc}a_{1}&a_{3}\\a_{4}&a_{6}
\end{array}\right|&
\left|\begin{array}{cc}a_{2}&a_{3}\\a_{5}&a_{6}
\end{array}\right|
\\[5mm]
\left|\begin{array}{cc}a_{1}&a_{2}\\a_{2}&a_{3}
\end{array}\right|&
\left|\begin{array}{cc}a_{1}&a_{3}\\a_{2}&a_{4}
\end{array}\right|&
\left|\begin{array}{cc}a_{1}&a_{4}\\a_{2}&a_{5}
\end{array}\right|&
\left|\begin{array}{cc}a_{2}&a_{3}\\a_{3}&a_{4}
\end{array}\right|&
\left|\begin{array}{cc}a_{2}&a_{4}\\a_{3}&a_{5}
\end{array}\right|&
\left|\begin{array}{cc}a_{3}&a_{4}\\a_{4}&a_{5}
\end{array}\right|
\\[5mm]
\left|\begin{array}{cc}a_{1}&a_{2}\\a_{3}&a_{4}
\end{array}\right|&
\left|\begin{array}{cc}a_{1}&a_{3}\\a_{3}&a_{5}
\end{array}\right|&
\left|\begin{array}{cc}a_{1}&a_{4}\\a_{2}&a_{5}
\end{array}\right|&
\left|\begin{array}{cc}a_{2}&a_{3}\\a_{3}&a_{4}
\end{array}\right|&
\left|\begin{array}{cc}a_{2}&a_{4}\\a_{3}&a_{5}
\end{array}\right|&
\left|\begin{array}{cc}a_{3}&a_{4}\\a_{5}&a_{6}
\end{array}\right|
\\[5mm]
\left|\begin{array}{cc}a_{2}&a_{3}\\a_{3}&a_{4}
\end{array}\right|&
\left|\begin{array}{cc}a_{2}&a_{4}\\a_{3}&a_{5}
\end{array}\right|&
\left|\begin{array}{cc}a_{2}&a_{5}\\a_{3}&a_{6}
\end{array}\right|&
\left|\begin{array}{cc}a_{3}&a_{4}\\a_{4}&a_{5}
\end{array}\right|&
\left|\begin{array}{cc}a_{3}&a_{5}\\a_{4}&a_{6}
\end{array}\right|&
\left|\begin{array}{cc}a_{4}&a_{5}\\a_{5}&a_{6}
\end{array}\right|
\end{array}\right]$$
and
$$\left[\begin{array}{rrr}
\left|\begin{array}{cc}a_{0}&a_{1}\\a_{1}&a_{2}
\end{array}\right|&
\left|\begin{array}{cc}a_{1}&a_{2}\\a_{2}&a_{3}
\end{array}\right|&
\left|\begin{array}{cc}a_{2}&a_{3}\\a_{3}&a_{4}
\end{array}\right|\\[5mm]
\left|\begin{array}{cc}a_{1}&a_{2}\\a_{2}&a_{3}
\end{array}\right|&
\left|\begin{array}{cc}a_{2}&a_{3}\\a_{3}&a_{4}
\end{array}\right|&
\left|\begin{array}{cc}a_{3}&a_{4}\\a_{4}&a_{5}
\end{array}\right|
\\[5mm]
\left|\begin{array}{cc}a_{2}&a_{3}\\a_{3}&a_{4}
\end{array}\right|&
\left|\begin{array}{cc}a_{3}&a_{4}\\a_{4}&a_{5}
\end{array}\right|&
\left|\begin{array}{cc}a_{4}&a_{5}\\a_{5}&a_{6}
\end{array}\right|
\end{array}\right]$$
respectively.
Thus the positive definiteness of $C(H_n(\alpha))$ implies that of $H_{n-1}(\L(\alpha))$.
In other words, the positive definiteness of $\alpha$ implies that of $\L(\alpha)$, as desired.

Now let $\alpha$ be a Stieltjes moment sequence.
Then $\alpha$ and $\overline{\alpha}$ are positive definite sequences by Lemma \ref{2pd},
so are $\L(\alpha)$ and $\L(\overline{\alpha})$.
Note that $\overline{\L(\alpha)}=\L(\overline{\alpha})$.
Hence $\L(\alpha)$ is also a Stieltjes moment sequence again by Lemma \ref{2pd}.
On the other hand, $\L(\alpha)>0$ by Corollary \ref{mainremark},
the sequence $\alpha$ is therefore infinitely log-convex.
This completes the proof of the theorem.
\end{proof}

Stieltjes moment sequences are much better behaved than infinitely
log-convex sequences and there are various approaches to show that a
sequence is a Stieltjes moment sequence. For example, Liang et al.
\cite{LMW15} showed that many Catalan-like numbers form Stieltjes
moment sequences via the total positivity of the corresponding
Aigner's recursive matrices \cite{Aig99}, including the Bell
numbers, the Catalan numbers, the central binomial coefficients, the
central Delannoy numbers, the factorial numbers and the large
Schr\"oder numbers (see also Corollary \ref{p-sm}). These
Catalan-like numbers are therefore infinitely log-convex, which, in
particular, settles a conjecture of Chen and Xia about the infinite
log-convexity of the Schr\"oder numbers \cite[Conjecture 5.4]{CX11}.

\section{Stieltjes moment sequences of polynomials}

Let $f(q)$ and $g(q)$ be two real polynomials in $q$. We sat that
$f(q)$ is {\it $q$-nonnegative} if $f(q)$ has nonnegative coefficients.
Denote $f(q)\ge_q g(q)$ if $f(q)-g(q)$ is $q$-nonnegative. Let
$A(q)=[a_{n,k}(q)]_{n,k\ge 0}$ be a matrix whose entries are all
real polynomials in $q$. We say that $A(q)$ is {\it $q$-TP} if all
minors are $q$-nononegative. Let $\alpha(q)=(a_n(q))_{n\ge 0}$ be a
sequence of real polynomials in $q$. We say that the sequence is
{\it strongly $q$-log-convex} ($q$-SLCX for short) if
$$a_{n+1}(q)a_{m-1}(q)\ge_q a_{n}(q)a_{m}(q)$$ for $n\ge m\ge 0$. If the
Hankel matrix $H(\alpha(q))=[a_{i+j}(q)]_{i,j\ge 0}$ is $q$-TP, then
we say that $\alpha(q)$ is a {\it $q$-Stieltjes moment} ($q$-SM for
short) sequence. If $\alpha(q)$ is a Stieltjes moment sequence for
any fixed $q\ge 0$, then we say that $\alpha(q)$ is a {\it pointwise
Stieltjes moment} (PSM for short) sequence of polynomials. Clearly,
a $q$-SM sequence is both $q$-SLCX and PSM.

The simplest non-trivial $q$-SM sequence in combinatorics should be
$((q+1)^n)_{n\ge 0}$. Zhu \cite{Zhu13} showed tha the Bell
polynomials, the Eulerian polynomials, the Narayana polynomials (of
type B), the $q$-central Delannoy numbers, the $q$-Schr\"oder
numbers are $q$-SLCX. In this section we show that these polynomials
actually form $q$-SM sequences.

Let $\sigma=(s_k(q))_{k\ge 0}$ and $\tau=(t_{k+1}(q))_{k\ge 0}$ be two sequences of polynomials.
Define an infinite lower triangular matrix $R(q):=R^{\sigma,\tau}(q)=[r_{n,k}(q)]_{n,k\ge 0}$ by the recurrence
\begin{eqnarray}\label{rr-q}
r_{0,0}(q)=1,\quad r_{n+1,k}(q)=r_{n,k-1}(q)+s_k(q)r_{n,k}(q)+t_{k+1}(q)r_{n,k+1}(q),
\end{eqnarray}
where $r_{n,k}(q)=0$ unless $n\ge k\ge 0$.
Similar to Aigner \cite{Aig01},
we say that $R(q)$ is {\it the $q$-recursive matrix} and
$r_{n,0}(q)$ are {\it the $q$-Catalan-like numbers} corresponding to $(\sigma,\tau)$.
Call the tridiagonal matrix (Jacobi matrix)
\begin{equation*}\label{J-eq}
J(q)=\left[
\begin{array}{ccccc}
s_0(q) & 1 &  &  &\\
t_1(q) & s_1(q) & 1 &\\
 & t_2(q) & s_2(q) & 1 &\\
 & & t_3(q) & s_3(q) & \ddots\\
& & &\ddots & \ddots \\
\end{array}\right]
\end{equation*}
{\it the coefficient matrix} of the recurrence \eqref{rr-q}.

\begin{exm}\label{basic-qSM}
Many well-known polynomials are the $q$-Catalan-like numbers.
\begin{itemize}
  \item [\rm (i)] The Bell polynomials $B_n(q)=\sum_{k=0}^nS(n,k)q^k$ when $s_k=k+q$ and $t_{k}=kq$;
  \item [\rm (ii)] The Eulerian polynomials $A_n(q)=\sum_{k=0}^nA(n,k)q^k$ when $s_k(q)=(k+1)q+k$ and $t_k=k^2q$;
  \item [\rm (iii)] The $q$-Schr\"oder numbers $r_n(q)=\sum_{k=0}^n\binom{n+k}{n-k}\frac{1}{k+1}\binom{2k}{k}q^k$ when $s_0=q+1,s_k=2q+1$ and
  $t_k=q(q+1)$;
  \item [\rm (iv)] The $q$-central Delannoy numbers $D_n(q)=\sum_{k=0}^n\binom{n+k}{n-k}\binom{2k}{k}q^k$ when $s_k=1+2q,t_1=2q(1+q)$ and
  $t_k=q(1+q)$;
  \item [\rm (v)] The Narayana polynomials $N_n(q)=\sum_{k=1}^n\frac{1}{n}\binom{n}{k}\binom{n}{k-1}q^k$ when $s_0=q, s_k=1+q$ and
  $t_k=q$;
  \item [\rm (vi)] The Narayana polynomials $W_n(q)=\sum_{k=0}^n\binom{n}{k}^2q^k$ of type B when $s_k=1+q, t_1=2q$ and $t_k=q$ for
  $k>1$.
\end{itemize}
\end{exm}

Liang et al.~\cite[Theorem 2.1]{LMW15} showed that
if the coefficient matrix of a recursive matrix is TP,
then the corresponding Catalan-like numbers form a SM sequence.
The method of the proof used in \cite[Theorem 2.1]{LMW15} can be carried over verbatim to its q-analogue.
Here we omit the details for brevity.

\begin{thm}\label{jh-lem}
If the coefficient matrix $J(q)$ is $q$-TP,
then the corresponding $q$-Catalan-like numbers $r_{n,0}(q)$ form a $q$-SM sequence.
\end{thm}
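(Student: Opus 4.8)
The plan is to realize the Hankel matrix $H(\alpha(q))=[r_{i+j,0}(q)]_{i,j\ge 0}$ as an ordinary matrix product of $q$-TP matrices built from $J(q)$, and then to use the fact that $q$-TP is preserved under multiplication. The starting point is a purely algebraic factorization of the Hankel matrix in terms of the recursive matrix $R(q)$ itself. Writing $D(q)=\operatorname{diag}(d_0(q),d_1(q),\dots)$ with $d_0(q)=1$ and $d_k(q)=t_1(q)t_2(q)\cdots t_k(q)$, I claim that $H(\alpha(q))=R(q)\,D(q)\,R(q)^{T}$, that is,
$$r_{m+n,0}(q)=\sum_{k\ge 0} d_k(q)\,r_{m,k}(q)\,r_{n,k}(q)\qquad(m,n\ge 0).$$
To establish this I would set $G_{m,n}=\sum_k d_k(q)\,r_{m,k}(q)\,r_{n,k}(q)$ and show, using only the recurrence \eqref{rr-q} together with $d_{k+1}(q)=d_k(q)t_{k+1}(q)$, that $G_{m+1,n}=G_{m,n+1}$: a single reindexing matches the three terms coming from $r_{m+1,k}=r_{m,k-1}+s_kr_{m,k}+t_{k+1}r_{m,k+1}$ against the corresponding three terms for $r_{n+1,k}$. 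Hence $G_{m,n}$ depends only on $m+n$, and since $G_{0,n}=r_{n,0}(q)$ the claimed identity follows. This step is a finite polynomial computation, which is precisely why it transfers verbatim from the integer setting of \cite{LMW15} to the present one: no positivity is invoked, only that the entries are polynomials in $q$.

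Granting the factorization, the theorem reduces to a single substantive point: that $J(q)$ being $q$-TP forces the recursive matrix $R(q)$ to be $q$-TP. Once that is known the conclusion is immediate. The diagonal matrix $D(q)$ has $q$-nonnegative entries and is therefore $q$-TP; $R(q)^{T}$ is $q$-TP because transposition preserves every minor of $R(q)$; and the Cauchy--Binet formula expresses each minor of the product $R(q)\,D(q)\,R(q)^{T}$ as a sum, with coefficient $+1$, of products of minors of the three factors. Since sums and products of $q$-nonnegative polynomials are again $q$-nonnegative, every minor of $H(\alpha(q))$ is $q$-nonnegative; thus $H(\alpha(q))$ is $q$-TP and $\alpha(q)=(r_{n,0}(q))_{n\ge 0}$ is a $q$-SM sequence.

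For the $q$-total positivity of $R(q)$ I would follow the route of \cite{LMW15}. The shift relation $S\,R(q)=R(q)\,J(q)$, where $S=[\delta_{i+1,j}]_{i,j\ge 0}$, is read off directly from \eqref{rr-q} and lets one reduce an arbitrary minor of $R(q)$, by induction on the row indices and Cauchy--Binet, to $q$-nonnegative combinations of minors of $J(q)$; equivalently, one uses the bidiagonal (Neville) factorization of the $q$-TP tridiagonal matrix $J(q)$ into elementary $q$-nonnegative bidiagonal factors and shows that $R(q)$ inherits the resulting product structure. I expect this to be the main obstacle. The difficulty is genuine: because all rows of $R(q)$ record weighted paths issuing from the \emph{single} source at height $0$, the minors of $R(q)$ are not given directly by a Lindstr\"om--Gessel--Viennot count, and so the full $q$-TP of $J(q)$ — not merely the $q$-nonnegativity of its entries — must be used. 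The one point needing care in the $q$-analogue is to check that every coefficient produced in this reduction is a nonnegative integer or a product of the $q$-nonnegative $t_k(q)$, so that $q$-nonnegativity is maintained at each step; this is exactly the verification that makes the argument of \cite{LMW15} carry over without change.
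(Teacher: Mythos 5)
Your proposal is correct and takes essentially the same approach as the paper, whose entire proof is to invoke \cite[Theorem 2.1]{LMW15} and note that its method carries over verbatim to the $q$-setting: that method is precisely your factorization $H(\alpha(q))=R(q)D(q)R(q)^{T}$ with $D(q)=\mathrm{diag}(1,t_1(q),t_1(q)t_2(q),\dots)$, combined with the lemma that $q$-TP of $J(q)$ forces $q$-TP of $R(q)$ (proved exactly by your sketched route, i.e.\ the shift relation $SR(q)=R(q)J(q)$, induction on rows, and Cauchy--Binet, all of which only involve coefficients $+1$ and hence preserve $q$-nonnegativity), followed by a final application of Cauchy--Binet. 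You in fact supply more detail than the paper, which omits the argument entirely.
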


The following criterion for the total positivity of a tridiagonal matrix will be very useful.

\begin{lem}\label{bc-lem}
Let $b_n(q)$ and $c_n(q)$ be all $q$-nonnegative.
Then the tridiagonal matrix
$$\left[
      \begin{array}{cccc}
        b_1(q)+c_1(q) & 1 &  &  \\
        b_2(q)c_1(q) & b_2(q)+c_2(q) & 1 &  \\
         & b_3(q)c_2(q) & b_3(q)+c_3(q) & \ddots \\
         &  & \ddots & \ddots \\
      \end{array}
    \right]$$
is $q$-TP.
\end{lem}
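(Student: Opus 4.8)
The plan is to exhibit an explicit bidiagonal factorization of the tridiagonal matrix and then invoke the fact that $q$-total positivity is preserved under matrix multiplication. Write $M(q)$ for the matrix in the statement, with $1$-indexed entries $M_{n,n}=b_n(q)+c_n(q)$, $M_{n,n+1}=1$ and $M_{n+1,n}=b_{n+1}(q)c_n(q)$. A naive attempt to factor $M(q)=L(q)U(q)$ with $L(q)$ unit lower bidiagonal and $U(q)$ upper bidiagonal reproduces every entry except the top-left one: the product yields $c_1(q)$ there rather than $b_1(q)+c_1(q)$, because each interior diagonal entry $b_n(q)+c_n(q)$ collects a contribution $b_n(q)$ from the row above, which is absent in the first row. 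I expect this corner defect to be the only genuine obstacle, and I would remove it by passing to an enlarged matrix.

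Concretely, I would adjoin a new index $0$, set $b_0(q):=0$ and $c_0(q):=1$, and consider the extended matrix $M'(q)=[M'_{m,n}(q)]_{m,n\ge 0}$ with $M'_{n,n}=b_n(q)+c_n(q)$, $M'_{n,n+1}=1$ and $M'_{n+1,n}=b_{n+1}(q)c_n(q)$ for all $n\ge 0$. Since $b_0(q)=0$, the top-left entry is now simply $c_0(q)$, and $M'(q)$ admits the clean factorization $M'(q)=L(q)U(q)$, where $L(q)$ is lower bidiagonal with $L_{n,n}=1$, $L_{n+1,n}=b_{n+1}(q)$, and $U(q)$ is upper bidiagonal with $U_{n,n}=c_n(q)$, $U_{n,n+1}=1$. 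A direct entrywise check confirms that $L(q)U(q)$ has diagonal $c_n(q)+b_n(q)$, subdiagonal $b_{n+1}(q)c_n(q)$, and superdiagonal $1$, matching $M'(q)$.

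Next I would verify that both factors are $q$-TP. Every minor of a bidiagonal matrix is either zero or a product of its diagonal and off-diagonal entries; as all those entries are $q$-nonnegative, $L(q)$ and $U(q)$ are $q$-TP. The Cauchy--Binet formula expresses each minor of a product as a sum of products of minors of the two factors, and sums and products of $q$-nonnegative polynomials are again $q$-nonnegative; hence $q$-total positivity is closed under multiplication. Therefore $M'(q)=L(q)U(q)$ is $q$-TP.

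Finally, $M(q)$ is exactly the submatrix of $M'(q)$ obtained by deleting row $0$ and column $0$, so every minor $\det M(q)[I\mid J]$ with $I,J\subseteq\{1,2,\dots\}$ coincides with the minor $\det M'(q)[I\mid J]$, which is $q$-nonnegative; in particular these minors never involve the auxiliary value $c_0(q)$, so the choice $c_0(q)=1$ is harmless. Consequently $M(q)$ is $q$-TP, as claimed. As an independent check one may compute the leading principal minors through the recurrence $D_n=(b_n+c_n)D_{n-1}-b_nc_{n-1}D_{n-2}$ and obtain the manifestly $q$-nonnegative closed form $D_n(q)=\sum_{j=0}^{n}b_1(q)\cdots b_j(q)\,c_{j+1}(q)\cdots c_n(q)$.
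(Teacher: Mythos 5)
Your proof is correct, but you took a detour that the paper avoids entirely. Both arguments rest on the same two pillars---bidiagonal matrices with $q$-nonnegative entries are $q$-TP, and by Cauchy--Binet the product of $q$-TP matrices is $q$-TP---yet the paper factors the given matrix directly, with no augmentation, by multiplying the two bidiagonal factors in the \emph{opposite} order: it writes the tridiagonal matrix as $U(q)L(q)$, where $U(q)$ is upper bidiagonal with diagonal $b_1(q),b_2(q),\dots$ and superdiagonal $1$'s, and $L(q)$ is lower bidiagonal with unit diagonal and subdiagonal $c_1(q),c_2(q),\dots$. In that order the $(n,n)$ entry of the product is
$$b_n(q)\cdot 1+1\cdot c_n(q)=b_n(q)+c_n(q),$$
because the cross term now comes from index $n+1$ (always present in the infinite matrix), not from index $n-1$ (absent in the first row); so the corner defect you fought against is purely an artifact of insisting on the $LU$ order. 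Your fix---adjoining an index $0$ with $b_0=0$, $c_0=1$, factoring the enlarged matrix as $LU$, and observing that every minor of the original matrix is a minor of the enlarged one---is sound, and the closed-form determinant identity at the end is a correct (though not load-bearing) sanity check. What the paper's choice buys is brevity: the $UL$ order gives the desired matrix on the nose, so the proof is one display plus two sentences. What your version illustrates is the useful general principle that $q$-total positivity is inherited by submatrices, which lets an imperfect factorization still do the job.
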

\begin{proof}
We have the decomposition
\begin{eqnarray*}
&&\left[
      \begin{array}{cccc}
        b_1+c_1 & 1 &  &  \\
        b_2c_1 & b_2+c_2 & 1 &  \\
         & b_3c_2 & b_3+c_3 & \ddots \\
         &  & \ddots & \ddots \\
      \end{array}
    \right]\\
&=&\left[\begin{array}{cccc}
b_1 & 1 &  & \\
 & b_2 & 1 & \\
 &  & b_3 & \ddots\\
 &  &  & \ddots\\
\end{array}\right]
\left[\begin{array}{cccc}
1 &  &  &\\
c_1 & 1 & &\\
 & c_2 & 1 &\\
 & & \ddots & \ddots\\
\end{array}\right].
\end{eqnarray*}
Clearly,
bidiagonal matrices whose entries are $q$-nonnegative are $q$-TP,
and the product of $q$-TP matrices is still $q$-TP.
So the statement follows.
\end{proof}

\begin{coro}\label{ex-qsm}
The six sequences of polynomials in Example \ref{basic-qSM} are all $q$-SM.
\end{coro}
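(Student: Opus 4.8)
The plan is to combine Theorem~\ref{jh-lem} with Lemma~\ref{bc-lem}. By Theorem~\ref{jh-lem} it suffices to show that the coefficient matrix $J(q)$ of each of the six families in Example~\ref{basic-qSM} is $q$-TP, and by Lemma~\ref{bc-lem} this follows as soon as one writes $J(q)$ in the prescribed form, i.e. exhibits $q$-nonnegative polynomials $b_k(q),c_k(q)$ with $s_k(q)=b_{k+1}(q)+c_{k+1}(q)$ and $t_k(q)=b_{k+1}(q)c_k(q)$. Concretely this amounts to factoring each off-diagonal weight $t_k$ and splitting each diagonal weight $s_k$ compatibly, so that the two bidiagonal factors of $J(q)$ produced in the proof of Lemma~\ref{bc-lem} have $q$-nonnegative entries.

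First I would carry this out for five of the families by simply writing down the splittings: for the Bell polynomials take $b_k=k-1,\ c_k=q$; for the Eulerian polynomials $b_k=k-1,\ c_k=kq$; for the Narayana polynomials $N_n(q)$ take $b_1=0,\ c_1=q$ and $b_k=1,\ c_k=q$ for $k\ge 2$; for the $q$-Schr\"oder numbers $b_1=0,\ c_1=q+1$ and $b_k=q,\ c_k=q+1$ for $k\ge 2$; and for the $q$-central Delannoy numbers $b_1=1,\ c_1=2q$ and $b_k=1+q,\ c_k=q$ for $k\ge 2$. In each case one checks $b_{k+1}+c_{k+1}=s_k$ and $b_{k+1}c_k=t_k$ directly and that all entries are $q$-nonnegative. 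The only subtlety is that the families with an exceptional first weight (the smaller $s_0=q+1$ of the Schr\"oder numbers, or the doubled $t_1=2q(1+q)$ of the central Delannoy numbers) force one to begin with $b_1=0$ or with an enlarged $c_1$ before the constant steady-state split takes over; Lemma~\ref{bc-lem} then gives the $q$-TP of $J(q)$.

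The genuine obstacle is the Narayana polynomials $W_n(q)$ of type B, for which Lemma~\ref{bc-lem} \emph{cannot} be applied at all. Here $s_k=1+q$ forces $b_j+c_j=1+q$ for every $j\ge 1$, so each $b_j,c_j$ has degree $\le 1$ with coefficients in $[0,1]$; but $t_1=2q$ would require $b_2c_1=2q$, and the linear coefficient of $b_2c_1$ can equal $2$ only if $b_2=c_1=1+q$, which gives $b_2c_1=(1+q)^2\ne 2q$. Hence no admissible $q$-nonnegative decomposition exists, and I would instead establish the $q$-TP of the tridiagonal $J(q)$ directly, using the standard structural fact that every minor of a tridiagonal matrix is a product of its consecutive principal minors and single matrix entries, so that it suffices that all consecutive principal minors be $q$-nonnegative. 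Writing $D_n(q)$ for the leading principal minor of order $n$, the recurrence $D_n=(1+q)D_{n-1}-qD_{n-2}$, with the corrected value $D_2=(1+q)^2-2q$ coming from $t_1=2q$, yields $D_n=1+q^n$; and every consecutive principal minor avoiding the first row and column is a minor of the constant-weight Jacobi matrix ($s=1+q,\ t=q$) and equals $1+q+\cdots+q^m$. All of these are $q$-nonnegative, so $J(q)$ is $q$-TP and $W_n(q)$ is $q$-SM as well. I expect this determinantal step for $W_n(q)$ to be the main point requiring care, precisely because it is the one case the clean factorization lemma misses.
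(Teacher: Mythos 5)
Your proposal is correct and takes essentially the same approach as the paper: Theorem~\ref{jh-lem} plus the bidiagonal factorization of Lemma~\ref{bc-lem} for the first five families, and for the type-B Narayana polynomials a direct verification that the two kinds of contiguous principal minors of $J(q)$ equal $1+q^n$ and $1+q+\cdots+q^n$, which is exactly the paper's computation. Your extra touches---supplying the exceptional first terms in the Eulerian and Narayana splittings (where the paper's stated $b_k,c_k$ contain small index slips) and proving that no admissible factorization can exist for $W_n(q)$---refine but do not alter the argument.
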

\begin{proof}
It suffices to show that the corresponding coefficient matrices are $q$-TP.
The $q$-total positivity of the first five coefficient matrices may be obtained directly by Lemma \ref{bc-lem}.

(i)\quad For the Bell polynomials, $b_k=k-1$ and $c_k=q$ for $k\ge 1$.

(ii)\quad For the Eulerian polynomials, $b_k=(k-1)q$ and $c_k=k$ for $k\ge 1$.

(iii)\quad For the $q$-Schr\"oder numbers, $b_1=0, b_{k+1}=q$ and $c_k=q+1$ for $k\ge 1$.

(iv)\quad For the $q$-central Delannoy numbers, $b_1=1, b_{k+1}=q+1$ and $c_1=2q, c_{k+1}=q$. 

(v)\quad For the Narayana polynomials, $b_k=1$ and $c_k=q$ for $k\ge 1$.

(vi)\quad For the Narayana polynomials of type B, the corresponding coefficient matrix is
$$J(q)=\left[\begin{array}{ccccc}
q+1 & 1 &  &  &\\
2q & q+1 & 1 &\\
 & q & q+1 & 1 &\\
 & & q & q+1 & \ddots\\
& & &\ddots & \ddots \\
\end{array}\right].$$
We show that $J(q)$ is $q$-TP
by showing that all minors of $J(P)$ are $q$-nonnegative.
Clearly, it suffices to consider the following minors of two kinds:
$$d^{(1)}_n(q)=\left|\begin{array}{ccccc}
q+1 & 1 &  &  &\\
q & q+1 & 1 &\\
 & q & q+1 & \ddots &\\
 & & \ddots & \ddots & 1\\
& & & q & q+1\\
\end{array}\right|$$
and
$$d^{(2)}_n(q)=\left|\begin{array}{ccccc}
q+1 & 1 &  &  &\\
2q & q+1 & 1 &\\
 & q & q+1 & \ddots &\\
 & & \ddots & \ddots & 1\\
& & & q & q+1\\
\end{array}\right|.
$$
By an inductive argument, we obtain $d^{(1)}_n(q)=q^n+\cdots+q+1$
and $d^{(2)}_n(q)=q^n+1$. It is clear that both of them are
$q$-nonnegative, as required. Thus $J(q)$ is $q$-TP.
\end{proof}

\begin{rem}
Given a sequence of polynomials, sometimes we may construct a $q$-recursive matrix
such that these polynomials are precisely the $q$-Catalan-like numbers of this matrix.
As an example, consider the Morgan-Voyce polynomials $M_n(q)=\sum_{k=0}^n\binom{n+k}{n-k}q^k$.
Define a $q$-recursive matrix $M(q)$ by setting $s_0=q+1, s_k=1, t_1=q, t_{k+1}=0$ for $k\ge 1$.
Then it is not difficult to verify that $M_n(q)$ is precisely the corresponding $q$-Catalan-like numbers.
By Lemma \ref{bc-lem},
the coefficient matrix of $M(q)$ can be decomposed into the product of two bidiagonal matrices with $b_k=1, c_1=q$ and $c_{k+1}=0$ for $k\ge 1$,
and is therefore $q$-TP.
Thus $M_n(q)$ form a $q$-SM sequence.
\end{rem}
\begin{coro}\label{p-sm}
The following numbers form Stieltjes moment sequences respectively.
\begin{itemize}
  \item [\rm (i)] The Bell numbers $B_n=\sum_{k=0}^nS(n,k)=B_n(1)$;
  \item [\rm (ii)] The factorial numbers $n!=\sum_{k=0}^nA(n,k)=A_n(1)$;
  \item [\rm (iii)] The Schr\"oder numbers $r_n=r_n(1)$;
  \item [\rm (iv)] The central Delannoy numbers $D_n=D_n(1)$;
  \item [\rm (v)] The Catalan numbers $C_n=\frac{1}{n+1}\binom{2n}{n}=N_n(1)$;
  \item [\rm (vi)] The central binomial coefficients $\binom{2n}{n}=W_n(1)$.
\end{itemize}
\end{coro}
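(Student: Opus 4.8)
The plan is to obtain all six statements at once by specializing the $q$-polynomial results established above at $q=1$. The starting observation is that each numerical sequence in the list is exactly the value at $q=1$ of the corresponding polynomial sequence from Example~\ref{basic-qSM}: the Bell numbers arise from the Bell polynomials $B_n(q)$, the factorial numbers from the Eulerian polynomials via $\sum_{k}A(n,k)=n!$, the Schr\"oder numbers from the $q$-Schr\"oder numbers $r_n(q)$, the central Delannoy numbers from $D_n(q)$, the Catalan numbers from the Narayana polynomials via $\sum_{k}\frac{1}{n}\binom{n}{k}\binom{n}{k-1}=C_n$, and the central binomial coefficients from the type~B Narayana polynomials via $\sum_{k}\binom{n}{k}^2=\binom{2n}{n}$.

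First I would invoke Corollary~\ref{ex-qsm}, which states that each of these six polynomial sequences $\alpha(q)$ is $q$-SM, meaning every minor of its Hankel matrix $H(\alpha(q))$ is a $q$-nonnegative polynomial. The central step is then the elementary principle, recorded when $q$-SM sequences were introduced, that a $q$-SM sequence is pointwise Stieltjes moment: if each minor of $H(\alpha(q))$ has nonnegative coefficients, then substituting any fixed $q_0\ge 0$ yields a nonnegative real number, so $H(\alpha(q_0))$ is TP and hence $\alpha(q_0)$ is a genuine Stieltjes moment sequence. Taking $q_0=1$ in each of the six families produces precisely the six numerical sequences claimed.

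The hard part, such as it is, will be only the bookkeeping: verifying the combinatorial identities that express each numerical sequence as its polynomial evaluated at $q=1$. These are standard summation formulas and pose no genuine difficulty. Once they are in place, no sequence-by-sequence analysis is needed: the whole corollary collapses to a single substitution $q\mapsto 1$ in Corollary~\ref{ex-qsm}, together with the fact that $q$-nonnegativity of every Hankel minor is inherited by any nonnegative specialization of $q$.
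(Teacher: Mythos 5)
Your proposal is correct and is exactly the paper's (implicit) argument: the paper gives no separate proof for this corollary because it follows at once from Corollary~\ref{ex-qsm} together with the observation, made when $q$-SM sequences were defined, that a $q$-SM sequence is pointwise Stieltjes moment, so evaluating each polynomial family at $q=1$ yields the stated numerical sequences. Nothing further is needed, since the identities $B_n=B_n(1)$, $n!=A_n(1)$, etc.\ are standard and are already recorded in the statement of the corollary itself.
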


\section{Linear transformations and convolutions}

In this section we present some sufficient conditions for linear
transformations and convolutions that preserve Stieltjes moment
sequences. Similar studies have been carried out for P\'olya
frequency sequences \cite{Bre89}, log-concave sequences \cite{WY07},
log-convex sequences \cite{DP49,LW07}, as well as Stieltjes moment
sequences \cite{Ben11}.

Let $A=[a_{n,k}]_{n,k\ge 0}$ be an infinite nonnegative lower triangular matrix.
Define the $A$-linear transformation
\begin{eqnarray}\label{a-lt}
z_n=\sum_{k=0}^na_{n,k}x_k,\qquad n=0,1,2,\ldots
\end{eqnarray}
and the $A$-convolution
\begin{eqnarray}\label{a-c}
z_n=\sum_{k=0}^{n}a_{nk}x_ky_{n-k},\qquad n=0,1,2,\ldots.
\end{eqnarray}
We say that \eqref{a-lt} preserves the SM property: if $(x_n)_{n\ge
0}$ is a Stieltjes moment sequence, then so is $(z_n)_{n\ge 0}$.
Similarly, we say that \eqref{a-c} preserves the SM property: if
both $(x_n)_{n\ge 0}$ and $(y_n)_{n\ge 0}$ are Stieltjes moment
sequences, then so is $(z_n)_{n\ge 0}$.

The following is a classic characterization of Stieltjes moment
sequences (see, e.g., \cite[Theorem 1.3]{ST43} or \cite[p.
132--135]{Wid41}).

\begin{lem}\label{p-c}
The sequence $\alpha=(a_n)_{n\ge 0}$ is a Stieltjes moment sequence
if and only if
$$\sum_{n=0}^Nc_na_n\ge 0$$ for every polynomial
\begin{eqnarray*}
\sum_{n=0}^Nc_nq^n\ge 0
\end{eqnarray*}
on $[0,\infty)$.
\end{lem}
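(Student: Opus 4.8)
The plan is to verify the two implications separately, interpreting the hypothesis through the \emph{Riesz linear functional} $L$ defined on the space of real polynomials by $L(q^n)=a_n$ and extended linearly, so that $L\left(\sum_n c_n q^n\right)=\sum_n c_n a_n$. The condition in the statement then reads: $L(p)\ge 0$ for every polynomial $p$ that is nonnegative on $[0,\infty)$.

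First I would dispatch the ``only if'' direction, which is the easy half. If $\alpha$ is a Stieltjes moment sequence, then by \eqref{i-e} there is a nonnegative measure $\mu$ on $[0,\infty)$ with $a_n=\int_0^{+\infty}x^n\,d\mu(x)$. For any polynomial $p(q)=\sum_{n=0}^N c_n q^n$ with $p\ge 0$ on $[0,\infty)$, linearity and the interchange of a finite sum with the integral give
$$L(p)=\sum_{n=0}^N c_n a_n=\int_0^{+\infty}p(x)\,d\mu(x)\ge 0,$$
since the integrand is nonnegative on the support of $\mu$.

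For the ``if'' direction I would reduce the positivity of $L$ on the cone of polynomials nonnegative on $[0,\infty)$ to the positive semidefiniteness of the two Hankel matrices. The tool is the classical fact that a real polynomial $p$ satisfies $p(x)\ge 0$ for all $x\ge 0$ if and only if it can be written as $p(q)=f(q)^2+q\,g(q)^2$ for some real polynomials $f,g$. Writing $f(q)=\sum_i u_i q^i$ and $g(q)=\sum_i v_i q^i$ and applying $L$ term by term yields
$$L\left(f^2\right)=\sum_{i,j}u_iu_j a_{i+j}=u^{T}H(\alpha)\,u,\qquad L\left(q\,g^2\right)=\sum_{i,j}v_iv_j a_{i+j+1}=v^{T}H(\overline{\alpha})\,v.$$
Hence the hypothesis $L(p)\ge 0$ for all such $p$ is \emph{equivalent} to $H(\alpha)\succeq 0$ and $H(\overline{\alpha})\succeq 0$, i.e. both Hankel matrices are positive semidefinite. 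From here the existence of a representing measure on $[0,\infty)$ — equivalently the SM property via \eqref{i-e} and Lemma~\ref{2pd} — follows from the solution of the Stieltjes moment problem.

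The step I expect to be the main obstacle is precisely this last passage, from a nonnegative functional (equivalently, semidefinite Hankel data) to an honest measure: this is the substance of Haviland's theorem for the half-line and is where the analytic machinery of \cite{ST43,Wid41} is needed. Two technical points deserve care. The decomposition $p=f^2+q\,g^2$ must be justified by factoring $p$ over $\mathbb{R}$ and grouping the conjugate complex roots together with the negative real roots; and the gap between positive \emph{semi}definiteness (which my reduction yields, allowing finitely supported measures) and the positive \emph{definiteness} of Lemma~\ref{2pd} (which forces infinitely supported measures) should be reconciled, either by a limiting or Hahn--Banach extension argument producing the measure directly, or by treating the degenerate case separately. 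Once the representing measure is in hand the equivalence is complete.
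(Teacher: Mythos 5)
The paper never actually proves this lemma: it is quoted as a classical characterization, with the proof deferred entirely to \cite[Theorem 1.3]{ST43} and \cite[pp.~132--135]{Wid41}. Your proposal therefore supplies more than the paper does, and its outline is the standard one and is correct. The ``only if'' half via the representing measure \eqref{i-e} is complete as written. In the ``if'' half, the reduction through the Riesz functional is sound: $L(f^2)=u^{T}H(\alpha)u$ and $L(q\,g^2)=v^{T}H(\overline{\alpha})v$, so positivity of $L$ on the cone of polynomials nonnegative on $[0,\infty)$ is indeed equivalent to positive semidefiniteness of both Hankel matrices, granted the half-line P\'olya--Szeg\"o decomposition. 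Two remarks on the points you flagged. First, the single-square form $p=f^2+q\,g^2$ does hold, but your root-grouping sketch is incomplete by itself: you need the product identity $(a^2+qb^2)(c^2+qd^2)=(ac+qbd)^2+q(ad-bc)^2$ to combine the elementary factors; alternatively, the weaker sums-of-squares form $p=\sum_i f_i^2+q\sum_j g_j^2$ is easier to justify and serves your purpose equally well, since $L$ applied to each summand is still a Hankel quadratic form. Second, the passage from semidefinite Hankel data to an honest representing measure is, as you say, the analytic core (Stieltjes/Haviland); leaving it to \cite{ST43,Wid41} is legitimate here, since that is exactly what the paper does for the entire statement, and your reduction to that theorem is genuine content rather than circularity. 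Your observation about the definite/semidefinite mismatch with Lemma~\ref{2pd} is also apt: positive definiteness as stated there excludes degenerate (finitely supported) representing measures, an imprecision that sits in the paper rather than in your argument.
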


Our main result in this section is the following.

\begin{thm}\label{thm+conv}
Let $A_n(q)=\sum_{k=0}^{n}a_{n,k}q^k$ be the $n$th row generating function of the triangle $A$.
Assume that $(A_n(q))_{n\ge 0}$ is a Stieltjes moment sequence for any fixed $q\ge 0$.
Then both the $A$-linear transformation \eqref{a-lt} and the $A$-convolution \eqref{a-c} preserve the SM property.
\end{thm}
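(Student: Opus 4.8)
The plan is to carry out everything through the representing-measure characterization of the SM property, using Lemma \ref{p-c} as the working criterion and exploiting the hypothesis that $(A_n(q))_{n\ge0}$ is SM for each fixed $q\ge0$. The guiding idea is that if an SM sequence $(x_k)$ is represented by $x_k=\int_0^{+\infty}t^k\,d\mu(t)$, then applying the triangle $A$ converts the row generating functions into integrands, and nonnegativity can be checked inside the integral for each fixed value of the integration variable.

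First I would treat the linear transformation \eqref{a-lt}. Writing $x_k=\int_0^{+\infty}t^k\,d\mu(t)$ and interchanging the (finite) sum with the integral gives
$$z_n=\sum_{k=0}^n a_{n,k}\int_0^{+\infty}t^k\,d\mu(t)=\int_0^{+\infty}A_n(t)\,d\mu(t).$$
To verify that $(z_n)$ is SM via Lemma \ref{p-c}, take any polynomial $\sum_{n=0}^N c_nq^n\ge0$ on $[0,+\infty)$ and compute
$$\sum_{n=0}^N c_nz_n=\int_0^{+\infty}\Big(\sum_{n=0}^N c_nA_n(t)\Big)\,d\mu(t).$$
For each fixed $t\ge0$ the sequence $(A_n(t))_n$ is SM by assumption, so the forward direction of Lemma \ref{p-c} forces $\sum_{n=0}^N c_nA_n(t)\ge0$; integrating this nonnegative function against the nonnegative measure $\mu$ gives $\sum_{n=0}^N c_nz_n\ge0$, and the converse direction of Lemma \ref{p-c} then yields that $(z_n)$ is SM.

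For the convolution \eqref{a-c} I would represent both inputs, $x_k=\int_0^{+\infty}s^k\,d\mu(s)$ and $y_j=\int_0^{+\infty}u^j\,d\nu(u)$, so that (by Tonelli, since the monomials are nonnegative)
$$z_n=\int_0^{+\infty}\!\!\int_0^{+\infty}\Big(\sum_{k=0}^n a_{n,k}s^ku^{n-k}\Big)\,d\mu(s)\,d\nu(u).$$
Fixing a polynomial $\sum_{n=0}^N c_nq^n\ge0$ on $[0,+\infty)$, the whole matter reduces to showing that the bivariate polynomial
$$\Phi(s,u)=\sum_{n=0}^N c_n\sum_{k=0}^n a_{n,k}s^ku^{n-k}$$
is nonnegative on $[0,+\infty)^2$. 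For $u>0$ one has $\Phi(s,u)=\sum_{n=0}^N c_n\,u^nA_n(s/u)$, and the sequence $(u^nA_n(s/u))_{n\ge0}$ is again SM: it arises from the SM sequence $(A_n(s/u))_n$ by the scaling $p\mapsto up$ applied to its representing measure. Hence Lemma \ref{p-c} gives $\Phi(s,u)\ge0$ for $s\ge0,\ u>0$, and since $\Phi$ is a polynomial, continuity extends this to the boundary $u=0$; integrating $\Phi\ge0$ against the nonnegative product measure $\mu\times\nu$ shows $\sum_n c_nz_n\ge0$, so $(z_n)$ is SM.

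The hard part will be the convolution step, and specifically the nonnegativity of $\Phi(s,u)$: one must recognize $(u^nA_n(s/u))_n$ as a Stieltjes moment sequence (the scale-invariance of the SM property) and then dispose of the degenerate slice $u=0$, for which the polynomial continuity argument is the cleanest device. The linear transformation is in fact the special case $y_n\equiv1$ of the convolution, the constant sequence being SM with representing measure $\delta_1$; I would nonetheless present it separately because its single-integral proof is more transparent and isolates the mechanism before the bivariate complications appear.
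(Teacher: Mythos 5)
Your proof is correct, but it takes a genuinely different route from the paper's on the convolution half. For the linear transformation \eqref{a-lt} the two arguments are essentially the same: the paper also fixes a polynomial $\sum_{n=0}^N c_nq^n\ge 0$ on $[0,+\infty)$ and uses Lemma \ref{p-c} together with the pointwise-SM hypothesis to get $\sum_{n=0}^N c_nA_n(q)\ge 0$ for all $q\ge 0$; where you then integrate against a representing measure of $(x_k)$, the paper simply applies Lemma \ref{p-c} a second time (forward direction, to the SM sequence $(x_k)$ and the nonnegative polynomial $\sum_n c_nA_n(q)$), so it never needs the integral representation \eqref{i-e} at all. The real divergence is at the convolution \eqref{a-c}: the paper does not prove this part, but quotes Bennett's theorem \cite[Theorem 6]{Ben11}, which states that whenever the $A$-linear transformation preserves the SM property, so does the $A$-convolution. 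You instead prove the needed instance directly, by homogenization: writing the bivariate polynomial as $\Phi(s,u)=\sum_n c_nu^nA_n(s/u)$ for $u>0$, noting that $(u^nA_n(s/u))_{n\ge 0}$ is SM because pushing forward the representing measure of $(A_n(s/u))_{n\ge 0}$ under $p\mapsto up$ keeps it a nonnegative measure on $[0,+\infty)$, and disposing of the slice $u=0$ by continuity of the polynomial $\Phi$; all of these steps are sound, and the integrability issues are vacuous since only finitely many moments enter. What the paper's route buys is brevity and an appeal to a general principle from the literature; what yours buys is self-containedness --- in effect your scaling-plus-continuity argument reproves exactly the special case of Bennett's theorem that the paper outsources, which is a worthwhile observation in its own right.
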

\begin{proof}
It is known~\cite[Theorem 6]{Ben11} that if the $A$-linear
transformation preserves the SM property, then the same goes for the
$A$-convolution. So it suffices to show that the $A$-linear
transformation preserves the SM property.

Let the polynomial
\begin{eqnarray*}
\sum_{n=0}^Nc_nq^n
\end{eqnarray*}
be nonnegative on $[0,+\infty)$.
By the assumption, the sequence
$$A_n(q)=\sum_{k=0}^{n}a_{n,k}q^k,\quad n=0,1,2,\ldots$$
is a Stieltjes moment sequence for any fixed $q\ge 0$.
Hence by Lemma \ref{p-c}, we have
$$\sum_{n=0}^Nc_n\sum_{k=0}^{n}a_{n,k}q^k\ge 0$$
for $q\ge 0$.
Now let $(x_n)_{n\ge 0}$ be a Stieltjes moment sequence.
Then
$$\sum_{n=0}^Nc_n\sum_{k=0}^{n}a_{n,k}x_k\ge 0$$
by Lemma \ref{p-c}.
Thus the sequence
$$z_n=\sum_{k=0}^{n}a_{n,k}x_k,\quad n=0,1,2,\ldots$$
is a Stieltjes moment sequence again by Lemma \ref{p-c}.
In other words, the linear transformation \eqref{a-lt} preserves the SM property,
as desired.
\end{proof}

The $n$th row generating function of the Pascal triangle is $\sum_{k=0}^n\binom{n}{k}q^k=(q+1)^n$.
So the following corollary is an immediate consequence of Theorem \ref{thm+conv},
which is due to P\'olya and Szeg\"o~\cite[Part VII, Theorem 42]{PS64}

\begin{coro}
If both $(x_n)_{n\ge 0}$ and $(y_n)_{n\ge 0}$ are Stieltjes moment sequences,
then so is their binomial convolution
$$z_n=\sum_{k=0}^{n}\binom{n}{k}x_ky_{n-k},\quad n=0,1,2,\ldots.$$
\end{coro}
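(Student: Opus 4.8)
The plan is to recognize this corollary as a direct instance of Theorem \ref{thm+conv} applied to the Pascal triangle $A=\left[\binom{n}{k}\right]_{n,k\ge 0}$, whose $A$-convolution \eqref{a-c} is precisely the binomial convolution displayed in the statement. To invoke Theorem \ref{thm+conv} I need only verify its single hypothesis: that the row generating functions $(A_n(q))_{n\ge 0}$ form a Stieltjes moment sequence for every fixed $q\ge 0$.

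First I would compute the row generating functions. By the binomial theorem, $A_n(q)=\sum_{k=0}^n\binom{n}{k}q^k=(q+1)^n$, so for a fixed $q\ge 0$ the sequence $(A_n(q))_{n\ge 0}$ is the geometric sequence $(r^n)_{n\ge 0}$ with ratio $r=q+1\ge 1$. Next I would check that every geometric sequence with nonnegative ratio is a Stieltjes moment sequence. The cleanest route is the integral representation \eqref{i-e}: taking $\mu$ to be the unit point mass at $r$ gives $\int_0^{+\infty}x^n\,d\mu(x)=r^n$, so $(r^n)_{n\ge 0}$ is a Stieltjes moment sequence. Alternatively, one argues directly from the Hankel matrix: $H((r^n))=[r^{i+j}]_{i,j\ge 0}=vv^T$ with $v=(1,r,r^2,\ldots)^T$ is a rank-one matrix, so all its minors of order $\ge 2$ vanish while its entries $r^{i+j}$ are nonnegative; hence $H((r^n))$ is TP and the sequence is SM.

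With the hypothesis verified, Theorem \ref{thm+conv} yields at once that the Pascal-triangle convolution $z_n=\sum_{k=0}^n\binom{n}{k}x_ky_{n-k}$ preserves the Stieltjes moment property. There is essentially no obstacle here: the entire content is the reduction of the hypothesis of Theorem \ref{thm+conv} to the triviality that a nonnegative geometric sequence is the moment sequence of a Dirac measure, after which the corollary is immediate from the theorem.
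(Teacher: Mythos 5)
Your proof is correct and matches the paper's own argument: the paper likewise derives this corollary from Theorem \ref{thm+conv} by noting that the row generating functions of the Pascal triangle are $(q+1)^n$, a geometric sequence that is Stieltjes moment for every fixed $q\ge 0$. Your explicit verification (Dirac measure, or the rank-one Hankel matrix) merely spells out what the paper treats as known, so the two proofs are essentially identical.
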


Since a $q$-Stieltjes moment sequence of polynomials is a pointwise Stieltjes moment sequence,
we have the following result. 

\begin{coro}\label{conv}
The following convolutions preserve the SM properties:
\begin{itemize}
  \item [\rm (i)] $z_n=\sum_{k=0}^nS(n,k)x_ky_{n-k}$;
  \item [\rm (ii)] $z_n=\sum_{k=0}^nA(n,k)x_ky_{n-k}$;
  \item [\rm (iii)] $z_n=\sum_{k=0}^n\frac{1}{n}\binom{n}{k}\binom{n}{k-1}x_ky_{n-k}$;
  \item [\rm (iv)] $z_n=\sum_{k=0}^n\binom{n}{k}^2x_ky_{n-k}$;
  \item [\rm (v)] $z_n=\sum_{k=0}^n\binom{n+k}{n-k}x_ky_{n-k}$.
\end{itemize}
\end{coro}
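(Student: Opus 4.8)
The plan is to recognize each convolution kernel as the coefficient array of a polynomial sequence already shown to be $q$-SM in Section 3, and then to invoke Theorem \ref{thm+conv}. That theorem asks precisely that the row generating function $A_n(q)=\sum_k a_{n,k}q^k$ be a Stieltjes moment sequence for every fixed $q\ge 0$, i.e., that $(A_n(q))_{n\ge 0}$ be PSM. Since a $q$-SM sequence is automatically PSM (as noted at the start of Section 3 and just before this corollary), it suffices to match each triangle's row generating function with one of the $q$-SM families established earlier.

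First I would read off the kernels. In (i) we have $a_{n,k}=S(n,k)$, with row generating function the Bell polynomial $B_n(q)$; in (ii), $a_{n,k}=A(n,k)$, giving the Eulerian polynomial $A_n(q)$; in (iii), $a_{n,k}=\tfrac1n\binom{n}{k}\binom{n}{k-1}$, giving the Narayana polynomial $N_n(q)$; in (iv), $a_{n,k}=\binom{n}{k}^2$, giving the Narayana polynomial of type B $W_n(q)$; and in (v), $a_{n,k}=\binom{n+k}{n-k}$, giving the Morgan-Voyce polynomial $M_n(q)$.

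Next I would cite the corresponding $q$-SM facts. Cases (i), (ii), (iii), (iv) are parts (i), (ii), (v), (vi) of Corollary \ref{ex-qsm}, which establishes that these four families are $q$-SM; case (v) is supplied by the Remark, where $M_n(q)$ is shown to be $q$-SM through a bidiagonal factorization of its coefficient matrix. In every case the $q$-SM property delivers the PSM hypothesis needed for Theorem \ref{thm+conv}, and applying that theorem to the triangle $A$ in turn yields that each listed $A$-convolution preserves the SM property.

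There is no genuine obstacle here: the substantive work has already been carried out in Corollary \ref{ex-qsm} and the Remark, so this corollary is a direct application of Theorem \ref{thm+conv}. The only care required is bookkeeping --- correctly matching each convolution kernel to its polynomial family, and remembering that case (v) draws on the Remark rather than on Corollary \ref{ex-qsm}.
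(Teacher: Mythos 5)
Your proposal is correct and is exactly the paper's argument: the paper justifies this corollary with the single observation that a $q$-SM sequence is PSM, so Theorem \ref{thm+conv} applies to the triangles whose row generating functions are the $q$-SM families of Section 3 (Corollary \ref{ex-qsm} for (i)--(iv), the Remark on the Morgan-Voyce polynomials for (v)). Your kernel-to-polynomial matching, including the distinction that case (v) relies on the Remark rather than Corollary \ref{ex-qsm}, agrees with the paper's intent in full.
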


\begin{rem}
Taking $x_k\equiv 1$ and $y_k\equiv 1$ in Corollary \ref{conv}
(i)-(iv), we obtain the SM property of the Bell numbers $B_n$, the
factorial numbers $n!$, the Catalan numbers $C_n$, and the central
binomial coefficients $b(n)=\binom{2n}{n}$ again. Taking $y_k\equiv
1, x_k=b(k),C_k$ and $1$ in (v), we obtain the SM property of the
central Delannoy numbers $D_n$, the Schr\"oder numbers $r_n$, and
the Fibonacci numbers $F_{2n+1}$ with odd indices respectively.
\end{rem}

\section{Conjectures and open problems}

The Ap\'ery numbers
$$A_n=\sum_{k=0}^n\binom{n}{k}^2\binom{n+k}{k}^2,\quad n=0,1,2,\ldots$$
and
$$B_n=\sum_{k=0}^n\binom{n}{k}^2\binom{n+k}{k},\quad n=0,1,2,\ldots$$
were introduced by Ap\'ery in his famous proof to the irrationality of $\zeta(3)=\sum_{n\ge 1}n^{-3}$.
Chen and Xia \cite{CX11} proposed the following conjecture.

\begin{conj}
Both $(A_n)_{n\ge 0}$ and $(B_n)_{n\ge 0}$ are infinitely log-convex sequences.
\end{conj}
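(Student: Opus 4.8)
The plan is to deduce the conjecture from the stronger assertion that $(A_n)_{n\ge 0}$ and $(B_n)_{n\ge 0}$ are themselves Stieltjes moment sequences: once this is known, Theorem \ref{mainthm} yields their infinite log-convexity immediately. Thus the entire problem reduces to establishing the SM property, and all effort should be directed there. I would treat the two sequences in parallel, since they are structurally analogous, and retreat to the $\zeta(2)$ sequence $(B_n)$ first as the technically lighter case.

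To prove, say, that $(A_n)_{n\ge 0}$ is a Stieltjes moment sequence, the most direct route is Lemma \ref{2pd}: it suffices to show that both $H(A)$ and $H(\overline{A})$ are positive definite, or equivalently to exhibit a nonnegative measure $\mu$ on $[0,+\infty)$ with $A_n=\int_0^{+\infty}x^n\,d\mu(x)$. I would first try to realize $(A_n)$ through the orthogonal-polynomial/continued-fraction correspondence implicit in \S3: a positive definite sequence corresponds to a Jacobi matrix with positive off-diagonal entries, and the Stieltjes condition (support in $[0,+\infty)$) corresponds, as in Lemma \ref{bc-lem}, to that Jacobi matrix being totally positive. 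The Jacobi parameters are recovered as ratios of consecutive Hankel determinants $D_k=\det H_k(A)$, so the task becomes showing that the resulting parameters are nonnegative and that the associated tridiagonal matrix admits a factorization of the form in Lemma \ref{bc-lem}.

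The main obstacle is that the Ap\'ery numbers do not fit the framework of \S3. They satisfy three-term recurrences whose coefficients depend on $n$,
\[(n+1)^3A_{n+1}=(2n+1)(17n^2+17n+5)A_n-n^3A_{n-1},\]
\[(n+1)^2B_{n+1}=(11n^2+11n+3)B_n+n^2B_{n-1},\]
rather than arising from a \emph{fixed} tridiagonal coefficient matrix $J$; consequently Theorem \ref{jh-lem} and Lemma \ref{bc-lem} do not apply directly, and even the sign of the trailing term differs between the two cases. Because the Hankel determinants $D_k$ of the Ap\'ery numbers have no known product formula, verifying the nonnegativity of all the derived Jacobi parameters --- and hence the total positivity required for the Stieltjes property --- seems to demand a genuinely new input. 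I therefore expect the crux to be either the discovery of an explicit moment representation of $A_n$ and $B_n$ by a measure on $[0,+\infty)$, or a direct argument that every Hankel minor is nonnegative; absent such an input the statement remains, as Chen and Xia found numerically, only conjectural.
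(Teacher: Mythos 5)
The statement you were asked to prove is the Chen--Xia conjecture recorded in Section~5 of the paper, and the paper itself offers \emph{no} proof of it: it is listed under ``Conjectures and open problems'' and is left open (only the analogous conjecture for the Schr\"oder numbers is settled, via Theorem~\ref{mainthm} and the Catalan-like/SM machinery). Your proposal correctly identifies the reduction that the paper itself implicitly endorses: if $(A_n)_{n\ge 0}$ and $(B_n)_{n\ge 0}$ were Stieltjes moment sequences, then Theorem~\ref{mainthm} would immediately give infinite log-convexity. But that stronger SM assertion is precisely Sokal's conjecture, stated in the same section as another open problem. So your plan reduces one open conjecture to another open conjecture; as you acknowledge in your closing paragraph, this is an accurate map of the terrain, not a proof.

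Your diagnosis of where the paper's tools break down is also correct, and worth making explicit. The Ap\'ery numbers satisfy three-term recurrences with $n$-dependent coefficients, namely $(n+1)^3A_{n+1}=(2n+1)(17n^2+17n+5)A_n-n^3A_{n-1}$ and $(n+1)^2B_{n+1}=(11n^2+11n+3)B_n+n^2B_{n-1}$, so they are not the Catalan-like numbers of any fixed coefficient matrix $J(q)$; hence Theorem~\ref{jh-lem} and Lemma~\ref{bc-lem} do not apply. The remaining tool, Theorem~\ref{thm+conv}, does not help either: writing $B_n=\sum_{k}\binom{n}{k}^2\binom{n+k}{k}\cdot 1$ as an $A$-linear transformation of the constant sequence would require the row generating functions $A_n(2,1;q)$ to be pointwise SM, which is exactly the open question the paper poses about the generalized Ap\'ery polynomials $A_n(r,s;q)$. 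So the gap in your proposal --- the missing proof that the Ap\'ery numbers are SM, or any direct argument for the nonnegativity of their Hankel minors --- is the same gap the paper deliberately leaves open; no error, but no theorem either.
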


Independently, Sokal \cite{Sok} suggested the following conjecture.

\begin{conj}
Both $(A_n)_{n\ge 0}$ and $(B_n)_{n\ge 0}$ are Stieltjes moment sequences.
\end{conj}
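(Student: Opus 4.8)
My starting point would be the Hankel characterization behind Lemma~\ref{2pd}: the sequence $(A_n)_{n\ge0}$ is a Stieltjes moment sequence exactly when both $H(\alpha)$ and $H(\overline{\alpha})$ are positive definite, that is, when the Hankel determinants $D_n=\det[A_{i+j}]_{0\le i,j\le n}$ and the shifted Hankel determinants $D_n^{(1)}=\det[A_{i+j+1}]_{0\le i,j\le n}$ are all strictly positive. The whole problem thus reduces to proving $D_n>0$ and $D_n^{(1)}>0$ for every $n$, and likewise for $(B_n)$. An equivalent and often more tractable formulation is to produce the Stieltjes continued fraction
\[
\sum_{n\ge0}A_n x^n=\cfrac{1}{1-\cfrac{\alpha_1 x}{1-\cfrac{\alpha_2 x}{1-\cdots}}}
\]
and to show that every partial numerator $\alpha_i$ is nonnegative; the $\alpha_i$ are explicit ratios of the determinants $D_n$ and $D_n^{(1)}$, so their nonnegativity is equivalent to the SM property.

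A useful simplification comes from the growth of the Ap\'ery numbers. The order-two recurrence $(n+1)^3A_{n+1}=(2n+1)(17n^2+17n+5)A_n-n^3A_{n-1}$ forces $A_n^{1/n}\to 17+12\sqrt2=(1+\sqrt2)^4$, and similarly $B_n^{1/n}\to (11+5\sqrt5)/2=\big((1+\sqrt5)/2\big)^5$. Consequently any representing measure must live on a bounded interval $[0,R]$, and after the rescaling $\widetilde A_n=A_n/R^n$ the Stieltjes problem becomes the (determinate) Hausdorff moment problem on $[0,1]$. This suggests route A: prove that $(\widetilde A_n)$ is completely monotone, i.e. $(-1)^k\Delta^k\widetilde A_n\ge0$ for all $k,n$, ideally by exhibiting the density of the limiting measure on $[0,R]$ in closed form. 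Route B stays with the determinants: use the holonomic recurrence to generate $D_n$ and $D_n^{(1)}$ symbolically, extract the Jacobi parameters (equivalently the $b_k,c_k$ of the decomposition in Lemma~\ref{bc-lem}), guess closed forms for the first several, and try to propagate their positivity by induction along the continued-fraction recursion.

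The hard part is that none of the structural machinery of this paper seems to apply. Theorem~\ref{jh-lem} together with Lemma~\ref{bc-lem} would settle the matter at once if the Ap\'ery numbers were the Catalan-like numbers of a tridiagonal matrix $J$ admitting a totally positive bidiagonal factorization; but this requires the Jacobi parameters of the continued fraction to be available in closed form with the correct nonnegativity. For the Ap\'ery numbers the generating function is transcendental of modular origin, satisfying a Picard--Fuchs type linear differential equation, and the Hankel determinants $D_n$, $D_n^{(1)}$ appear to admit no product formula, so the $\alpha_i$ have no evident closed form. The crux is therefore to establish the positivity of $D_n$ and $D_n^{(1)}$ (equivalently of the $\alpha_i$) uniformly in $n$, and not merely for the finitely many indices that are accessible numerically. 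One might attempt to combine precise asymptotics of these determinants, obtained from the known asymptotics of $A_n$, with a finite verification; but ruling out a sign change of some $\alpha_i$ at large index in a uniform way is exactly the difficulty, and is why the conjecture remains open.
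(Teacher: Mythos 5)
This statement is one of the paper's open conjectures (Section 5, attributed to Sokal), so there is no proof in the paper to compare against: the authors state it, relate it to the Chen--Xia infinite log-convexity conjecture, and leave it unresolved. Your proposal is correctly calibrated to that reality. You do not claim a proof; you reduce the problem to the standard equivalent formulations (positivity of the Hankel determinants $D_n$ and $D_n^{(1)}$, or nonnegativity of the Stieltjes continued-fraction coefficients), you correctly invoke the Ap\'ery recurrence and the growth rates $A_n^{1/n}\to 17+12\sqrt2$ and $B_n^{1/n}\to(11+5\sqrt5)/2$ to conclude that any representing measure would have compact support (so the problem is really a determinate Hausdorff problem, attackable via complete monotonicity), and you correctly diagnose why the paper's own machinery fails here: Theorem~\ref{jh-lem} and Lemma~\ref{bc-lem} require the Jacobi parameters (equivalently, a tridiagonal coefficient matrix with a nonnegative bidiagonal factorization) in closed form, and for the Ap\'ery numbers the Hankel determinants admit no known product formula, the generating function being of modular/Picard--Fuchs origin. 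That is exactly the obstruction, and your closing remark --- that a finite numerical verification plus asymptotics cannot uniformly exclude a sign change of some $\alpha_i$ at large index --- is the honest statement of why the conjecture is hard. In short: no gap, because nothing is claimed beyond what is true; your assessment agrees with the paper's treatment of the statement as open, and the routes you sketch (explicit representing measure via the modular parametrization, or induction along the continued-fraction recursion) are the natural ones to pursue.
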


Let $r$ and $s$ be two positive integers.
Define the generalized Ap\'ery polynomials
$$A_n(r,s;q)=\sum_{k=0}^n\binom{n}{k}^r\binom{n+k}{k}^sq^k,\quad n=0,1,2,\ldots.$$
A general problem to ask is whether $A_n(r,s;1)$ form an infinitely
log-convex sequence or even a Stieltjes moment sequence. We have
known that the $q$-central Delannoy numbers $D_n(q)=A_n(1,1;q)$ are
$q$-SM. Sun~\cite{Sun1102} conjectured that $A_n(2,1;q)$ are
$q$-log-convex. In which case, $A_n(r,s;q)$ are PSM, $q$-SLCX, or
even $q$-SM?

\section*{Acknowledgements}

This work was supported in part by the National Natural Science
Foundation of China (Nos. 11201191, 11371078, 11571150).
The authors thank the anonymous referee for his/her helpful comments.

\section*{References}

\end{document}